\newcommand{\Simplex}{\Delta_{K-1}}
\newcommand{\SimplexDown}{\Delta_{K-1}^{\downarrow}}
\newcommand{\TM}{T_M}
\newcommand{\Hp}[1]{H^{(#1)}}
\newtheorem{theorem}{Theorem}[section]
\newtheorem{lemma}[theorem]{Lemma}
\newtheorem{proposition}[theorem]{Proposition}
\theoremstyle{definition}
\newtheorem{definition}[theorem]{Definition}
\newtheorem{remark}[theorem]{Remark}
\DeclareMathOperator{\Var}{Var}
\begin{document}

\title{Necessary and Sufficient Conditions for Characterizing Finite Discrete Distributions with Generalized Shannon's Entropy}

\author{Jialin Zhang
\thanks{Jialin Zhang is with the Department of Mathematics and Statistics, Mississippi State University, Mississippi State, MS 39762, USA (e-mail: jzhang@math.msstate.edu).}
}

\markboth{Preprint}%
{Shell \MakeLowercase{\textit{et al.}}: A Sample Article Using IEEEtran.cls for IEEE Journals}

\IEEEpubid{0000--0000~\copyright~2025 IEEE}

\maketitle

\begin{abstract}
This article establishes necessary and sufficient conditions under which a finite set of Generalized Shannon's Entropy (GSE) characterizes a finite discrete distribution up to permutation. For an alphabet of cardinality $K$, it is shown that $K\!-\!1$ distinct positive real orders of GSE are sufficient (and necessary if no multiplicity) to identify the distribution up to permutation. When the distribution has a known multiplicity structure with $s$ distinct values, $s\!-\!1$ orders are sufficient and necessary. These results provide a label‑invariant foundation for inference on unordered sample spaces and enable practical goodness‑of‑fit procedures across disparate alphabets. The findings also suggest new approaches for testing, estimation, and model comparison in settings where moment-based and link-based methods are inadequate.
\end{abstract}

\begin{IEEEkeywords}
distribution characterization, escort distribution, total positivity, Tchebycheff systems, P-matrix, Gale--Nikaid\^o univalence.
\end{IEEEkeywords}

\section{Introduction}
\label{sec:Intro}
\IEEEPARstart{S}{hannon's} entropy \cite{shannon1948mathematical} has been widely studied and applied across information theory, statistics and machine learning.
There is a vast literature on implications and variants of entropy \cite{kullback1951information, renyi1961measures, quinlan1986induction, zhang2010re, zhang2016entropic} and on entropy estimation \cite{madow1963maximum, harris1975statistical, paninski2003estimation, zhang2012entropy, zhang2012asymptotic, schurmann2015note}. Recent works on countably infinite alphabets studied the nonexistence of entropy under heavy tails and introduced the conditional distribution of total collision (CDOTC), a special escort distribution, which in turn led to Generalized Shannon's Entropy (GSE) \cite{zhang2020generalized, zhang2022entropic2}.
For any $m>0$, the CDOTC is in one-to-one correspondence with the original distribution; for orders at least two, GSE always exists and allows bypassing Lindeberg-type conditions in certain limit theorems \cite{zhang2022asymptotic}.
Follow-up studies showed that GSE can characterize discrete distributions even across disparate sample spaces \cite{zhang2024nonparametric} and can simplify asymptotic normality for dependence testing via generalized mutual information \cite{zhang2024normal}. This motivates a deeper exploration of GSE's characterization power.

Characterizing probability distributions is a foundational task across many disciplines \cite{kotz1974characterizations, galambos2006characterizations}. Classically, such characterizations often proceed via characteristic functions or moment-generating functions. However, an increasing number of modern sample spaces do not support moment-based concepts in a natural way because they lack an inherent order \cite{zhang2016statistical} —for example, genotype categories, biodiversity labels, or neurons in a neural network. In these settings, label-invariant information-theoretic functionals (e.g., entropies and functionals of entropies) provide order-free descriptors of distributions. Systematically developing distributional characterizations based on such information-theoretic quantities is therefore both natural and timely.

The existing result of characterization with GSE \cite{zhang2024nonparametric} requires a countably infinite set of GSE to uniquely\footnote{Throughout, ``uniquely'' means ``uniquely up to permutation'', since GSE is label-invariant.} determine a discrete distribution. This may be impractical to verify. One may therefore ask: How many GSE orders suffice if the distribution is known to be finite? Reducing from countably many to finitely many is of independent interest for practice (e.g., estimation stability, computational cost). Classical characterizations via finite sets of statistics often require integer-powered estimands of orders $1$ through $K$ \cite{zhang2016entropic, chen2019goodness}. These become statistically and computationally expensive as orders increase. In contrast, results in this article show that $K-1$ arbitrary positive real orders of GSE suffice for a $K$-discrete distribution, which may strengthen identifiability and offer practical gains.

The main contribution of this article is to establish the necessary and sufficient conditions for a set of GSE to characterize a finite discrete distribution. The results are briefly presented here:

\begin{itemize}
    \item[(i)] (\textbf{Sufficiency}) For $K\ge2$, any set of GSE from $r\ge K-1$ distinct positive real orders uniquely determines a cardinality-$K$ discrete distribution $p$.
    \item[(ii)] (\textbf{Necessity}) For $K\ge3$, if the cardinality-$K$ discrete distribution $p$ has no multiplicity, then no set of GSE from $r\le K-2$ distinct positive real orders uniquely determines $p$.
    \item[(iii)] (\textbf{Binary Case}) For $K=2$ and non-uniform ($i.e.$, $p_1 \neq p_2$), any positive single order of GSE suffices and is necessary to determine this distribution.
    \item[(iv)] (\textbf{Multiplicity}) If $p$ has $s\ge2$ distinct values, then a set of GSE from $r = s-1$ distinct positive real orders is sufficient and necessary to uniquely determine $p$.
\end{itemize}

The rest is organized as follows. Some preliminary definitions and notations are presented in Section \ref{sec:prelim}. Section \ref{sec:main_results} presents the statement of Theorem \ref{thm:main} and its proof. The theorem has four statements, with the proof of each statement in Sections \ref{subsec:proof-main(i)}, \ref{subsec:proof-main-(ii)}, \ref{subsec:proof-main(iii)}, and \ref{subsec:proof-main(iv)}. The major proof techniques for this article lie in Section \ref{subsec:proof-main(i)}, in which it is split into two further steps in Sections \ref{proof-III-i-step1} and \ref{proof-III-i-step2}. Finally, Section \ref{sec:conclusion} concludes the article with potential future works.

\section{Preliminaries, Notation, and Background}
\label{sec:prelim}
Let $Z$ be supported on $\mathscr{Z}=\{z_k: k=1,\ldots,K\}$ with an associated distribution $p=(p_1,\ldots,p_K)$, where $K<\infty$.
Define the simplex $\Simplex=\{p\in(0,1)^K:\sum_{i=1}^K p_i=1\}$ and its sorted simplex region $\SimplexDown=\{p\in\Simplex:p_1>\cdots>p_K\}$.

\begin{definition}[Conditional Distribution of Total Collision (CDOTC) and Generalized Shannon's Entropy (GSE) \cite{zhang2020generalized}]
For $m>0$, the Conditional Distribution of Total Collision (CDOTC) of $p$ with order $m$ is
\[
p^{(m)}=\Big\{p_k^{(m)}\Big\}=\left\{\frac{p_k^m}{\sum_{i=1}^K p_i^m}\right\}.
\]
Let
\[
\Hp{m}(p)=H\big(p^{(m)}\big) = - \sum_{i=1}^{K}p^{(m)}_i \ln p^{(m)}_i.
\]
$\Hp{m}(p)$ is the Generalized Shannon's Entropy (GSE) of $p$ with order $m$.    
\end{definition}
\begin{remark}
For any $m>0$, $p$ and its CDOTC ($i.e.$, $p^{(m)}$) uniquely determine each other \cite{zhang2020generalized}.    
\end{remark}
Write
\[
S_p(m):=\sum_{i=1}^K p_i^m,\quad y_p(m):=\ln S_p(m),\quad
\Hp{m}(p)=y_p(m)-m\,y_p'(m).
\]
Let
\[
w_i(m):=p_i^m/S_p(m), \quad \mu(m):=\sum_i w_i(m)\ln p_i=y_p'(m),
\]
and
\[
\alpha(m):=\mu(m)-\ln p_1\le0
\]
with
\[
\alpha'(m)=\Var_{w(m)}(\ln p_i)\ge0.
\]

\begin{definition}[Strictly total positivity (STP) of order $k$ \cite{karlin1966tchebycheff}] \label{def:stp}
Let $\mathcal{K}:S\times T\to\mathbb{R}$ be a kernel. Then $\mathcal{K}$ is
strictly totally positive of order $k$ (denoted $\mathrm{STP}_k$) if for
each $r=1,2,\dots,k$ and whenever $s_1<\cdots<s_r$ in $S$ and
$t_1<\cdots<t_r$ in $T$,
\[
\det\!\big[\,\mathcal{K}(s_i,t_j)\,\big]_{i,j=1}^{r}\;> 0 .
\]
\end{definition}

\begin{definition}[Strictly sign regularity (SSR) of order $k$ \cite{karlin1968total}] \label{def:ssr}
Let $\mathcal{K}:S\times T\to\mathbb{R}$ be a kernel. Then $\mathcal{K}$ is
strictly sign-regular of order $k$ (denoted $\mathrm{SSR}_k$) if there exists a sequence
$\{\varepsilon_r\}_{r=1}^k$ with each $\varepsilon_r\in\{+1,-1\}$ such that
for each $r=1,2,\dots,k$ and whenever $s_1<\cdots<s_r$ in $S$ and
$t_1<\cdots<t_r$ in $T$,
\[
\varepsilon_r\,\det\!\big[\,\mathcal{K}(s_i,t_j)\,\big]_{i,j=1}^{r}\;> 0 .
\]
\end{definition}

\begin{definition}[Extended Complete Tchebycheff (ECT) systems \cite{karlin1968total}] \label{def:ECT}
Let $\phi_1,\dots,\phi_n$ be real functions on an open interval $(a,b)$. Then $\{\phi_1,\dots,\phi_n\}$ is an extended complete Tchebycheff (ECT) system on $(a,b)$ if,
for each $r=1,2,\dots,n$,
\[
\det\!\big[\,\phi_j(x_i)\,\big]_{i,j=1}^{r} \;>\; 0
\qquad\text{for all } a\le x_1\le \cdots \le x_r\le b .
\]
\end{definition}

\begin{remark}
~
\begin{enumerate}
    \item A kernel $\mathcal{K}$ is STP if all square evaluation determinants with strictly increasing arguments are positive.
    \item A kernel $\mathcal{K}$ is SSR if all its square evaluation determinants are nonzero and have a constant sign.
    \item An ECT system is equivalent to the SSR of all evaluation determinants on the interval considered \cite[Ch.~6, Thm.~1.1]{karlin1966tchebycheff}.
\end{enumerate}
\end{remark}

\begin{definition}[Multiplicity]\label{def:stratum}
Let $K=n_1+\cdots+n_s$ with $s\in\{2,\ldots,K\}$.
Define the ordered stratum
\[
\Sigma^{\downarrow}_{\bm n}
=\Big\{\,p\in\SimplexDown:\ \exists a_1>\cdots>a_s>0 \text{ with } p \text{ consisting of } n_j \text{ copies of } a_j,\ \sum_{j=1}^s n_j a_j=1\,\Big\},
\]
which has intrinsic dimension $s-1$ in the chart $(a_2,\ldots,a_s)$ with $a_1=(1-\sum_{j\ge2}n_j a_j)/n_1$.
\end{definition}

\section{Main Results}
\label{sec:main_results}

\begin{theorem}[Finite-order GSE Characterization]\label{thm:main}
Let $K\ge 2$ and $\Simplex=\{p\in(0,1)^K:\sum_i p_i=1\}$. Let $M\subset(0,\infty)$ consist of $r$ distinct orders, and $T_M:\Simplex\to\mathbb{R}^r$ be the mapping that sends $p$ to $\{\Hp{m}(p):m\in M\}$.
\begin{itemize}
\item[(i)] ({Sufficient Condition}) If $r\ge K-1$, then $T_M$ is injective up to permutation.
\item[(ii)] ({Necessary Condition}) If $K\ge 3$ and $r\le K-2$ and $p$ has no multiplicity, then $T_M$ is not injective on $\Simplex$.
\item[(iii)] ({$K=2$}) For $K=2$ and non-uniform, a single positive order ($r=1$) is sufficient and necessary for $T_M$ to be injective.
\item[(iv)] ({Multiplicity Known}) If $p$ has $s$ distinct values, then $T_M$ is injective up to permutation for $r\ge s-1$; if $s\ge3$ and $r\le s-2$, then $T_M$ is not injective on $\Simplex$.
\end{itemize}
\end{theorem}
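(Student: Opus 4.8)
The plan is to read (iv) as the stratified statement that unifies (i) and (ii): the ordered stratum $\Sigma^{\downarrow}_{\bm n}$ of Definition \ref{def:stratum} plays the role that $\SimplexDown$ plays in (i)--(ii), the count $s$ of distinct values replaces $K$, and the only genuinely new feature is that the $s$ effective atoms carry the known positive weights $n_1,\dots,n_s$ rather than being simple. I would therefore establish sufficiency ($r\ge s-1$) by transporting the two-step argument of (i) --- local Jacobian nonsingularity from total positivity, then global univalence via Gale--Nikaid\^o --- onto a single stratum, and establish necessity ($s\ge3$, $r\le s-2$) by the dimension-count of (ii) applied to that same stratum.

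For sufficiency I would first note that on $\Sigma^{\downarrow}_{\bm n}$ every GSE value depends on $p$ only through the distinct values $a_1>\dots>a_s$, since $S_p(m)=\sum_{j=1}^{s}n_j a_j^{m}$; hence $\TM$ restricted to the stratum is a smooth map from the $(s-1)$-dimensional chart $(a_2,\dots,a_s)$, with $a_1$ pinned by $\sum_j n_j a_j=1$, into $\mathbb{R}^{r}$. Writing $W_j(m)=n_j a_j^{m}/S_p(m)$ and $\mu(m)=\sum_j W_j(m)\ln a_j$, differentiation yields the clean form
\[
\frac{\partial\,\Hp{m}(p)}{\partial\,\ln a_l}=-\,m^{2}\,W_l(m)\big(\ln a_l-\mu(m)\big),
\]
which is exactly the entry appearing in (i) except that the per-atom weight is replaced by the group weight $W_l(m)$. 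Selecting any $s-1$ of the orders $m_1<\dots<m_{s-1}$ and bordering these gradients by the constraint gradient $(n_l a_l)_l$, each weight $n_l>0$ multiplies the $l$-th column and so pulls out of the bordered determinant as the positive constant $\prod_l n_l$; the remaining determinant is precisely the one shown to be nonzero in (i) via the $\mathrm{STP}$ (Tchebycheff) structure of the kernel $(m,x)\mapsto e^{mx}$. Local injectivity of $\TM|_{\Sigma^{\downarrow}_{\bm n}}$ follows, and I would then invoke the Gale--Nikaid\^o univalence step of (i) on the stratum chart to pass to global injectivity; since distinct points of an ordered stratum are distinct multisets, this is injectivity up to permutation, and enlarging $M$ beyond $s-1$ orders can only refine it.

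For necessity I would fix one stratum $\Sigma^{\downarrow}_{\bm n}$ with $s\ge3$ distinct values, an $(s-1)$-dimensional submanifold of $\Simplex$, and view $\TM$ on it, in the chart, as a $C^{1}$ map $F:U\subseteq\mathbb{R}^{s-1}\to\mathbb{R}^{r}$ with $r\le s-2<s-1$. Appending $s-1-r\ge1$ zero coordinates produces $G:U\to\mathbb{R}^{s-1}$ with $G$ injective iff $F$ is, yet $G(U)$ lies in a proper subspace and hence has Lebesgue measure zero; were $G$ injective, invariance of domain would make $G(U)$ open and nonempty, a contradiction. Thus $F$ identifies two distinct chart points, i.e.\ two points of $\Sigma^{\downarrow}_{\bm n}$ that are not permutations of each other but share all $r$ GSE values, so $\TM$ is not injective on $\Simplex$.

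I expect the sufficiency half to be the real obstacle. Local nonsingularity transfers painlessly because the positive weights factor columnwise out of the bordered determinant, but the global step is more delicate: one must confirm that the reduced chart Jacobian of $\TM|_{\Sigma^{\downarrow}_{\bm n}}$ is still a P-matrix on the whole (convex) chart so that Gale--Nikaid\^o applies, and here the weights enter less transparently through the elimination $a_1=(1-\sum_{j\ge2}n_j a_j)/n_1$. The necessity half is comparatively routine; the only point needing care is that two distinct points of a single ordered stratum are genuinely inequivalent under permutation, which holds because each is strictly decreasing.
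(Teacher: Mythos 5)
Your proposal treats only part (iv), reducing it to parts (i) and (ii) ``by transport onto the stratum,'' but (i) and (ii) are themselves part of the statement to be proved, and their content is never established. The gap is concentrated in the sufficiency half: you assert that after factoring out the group weights $n_l$, ``the remaining determinant is precisely the one shown to be nonzero in (i) via the STP (Tchebycheff) structure of the kernel $(m,x)\mapsto e^{mx}$.'' That determinant is \emph{not} a pure Laplace-kernel determinant. In the chart with $p_1$ eliminated, the Jacobian entries are (up to positive prefactors) $\Phi_m(u_k)=(\alpha(m)-u_k)e^{(m-1)u_k}-\alpha(m)$ with $\alpha(m)=\mu(m)-\ln p_1$, i.e.\ each column is an $m$-dependent affine combination of an exponential column and the constant column, with coefficients that change sign as $\alpha(m)$ crosses the $u_j$. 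Proving that all principal minors of this matrix are positive is the heart of the matter; the paper spends Lemmas~\ref{lemma:Tek}--\ref{lemma:KVD} and Proposition~\ref{prop:ECT-bridge} on it (column differencing to produce two-point adjacent mixtures of Laplace columns with monotone nonnegative weights, then Karlin's variation-diminishing machinery, then patching across the intervals where $\alpha(m)$ lies between consecutive $u_j$). Your proposal contains no substitute for this step, so the P-matrix property you need for the global univalence step is unproven. Relatedly, you correctly flag that the chart is convex, but classical Gale--Nikaid\^o univalence requires a rectangular domain; the paper has to prove a convex-domain version (Proposition~\ref{prop:P-on-convex}, via the Fiedler--Pt\'ak characterization of P-matrices), and that extension is also missing from your outline. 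Part (iii) is not addressed at all, though it is an easy monotonicity computation.

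On the positive side, your necessity argument is correct and is genuinely different from (and more elementary than) the paper's. The paper proves non-injectivity for $r\le K-2$ (and $r\le s-2$) by exhibiting a point where the Jacobian has full rank $r$ (via a skew ray and the STP of the Laplace kernel) and then applying the Rank Theorem to produce a positive-dimensional fiber. Your route --- append zero coordinates and invoke invariance of domain to contradict injectivity of a continuous map from an open set in $\mathbb{R}^{s-1}$ into a proper subspace --- needs only continuity, no rank computation, and no smoothness, and your observation that two distinct points of a single ordered stratum cannot be permutations of one another is exactly the right closing remark. But this does not repair the sufficiency half, which is where the theorem's substance lies.
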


\subsection{Proof of Theorem \ref{thm:main} (i): Injectivity of $T_M$ for $r\ge K-1$}

\label{subsec:proof-main(i)}

The proof is carried out in two steps:

\begin{enumerate}
    \item Subsection \ref{proof-III-i-step1} proves that the Jacobian of $T_M$ is a P-matrix for all $p\in\SimplexDown$.
    \item Subsection \ref{proof-III-i-step2} proves that $T_M$ is globally injective.
\end{enumerate}

Consider $\SimplexDown$ with chart $(p_2,\ldots,p_K)$ and $p_1=1-\sum_{k=2}^K p_k$.

For $m>0$,
\begin{equation}\label{eq:grad}
\frac{\partial \Hp{m}}{\partial p_k}(p)
=\frac{m^2 p_k^{\,m-1}}{S_p(m)}\Big(\mu(m)-\ln p_k\Big),\quad k=1,\ldots,K,
\end{equation}
and hence $T_M$'s Jacobian, denoted as $DT_M (p)$, is
\begin{equation}\label{eq:colfactor}
DT_M (p) = \left(\frac{\partial \Hp{m}}{\partial p_k}-\frac{\partial \Hp{m}}{\partial p_1}\right)(p)
= m^2\,\frac{p_1^{\,m-1}}{S_p(m)}\,\Phi_m(u_k),\quad k=2,\ldots,K,
\end{equation}
where
\[
u_k:=\ln\frac{p_k}{p_1}\in(-\infty,0),\qquad
\Phi_m(u):=(\alpha(m)-u)e^{(m-1)u}-\alpha(m).
\]
Note that the sign of each element in $DT_M (p)$ is determined solely by $\Phi_m(u_k)$. Lemma \ref{lemma:LaplaceSTP} is stated next without proof since it is trivial based on Definition \ref{def:stp}.

\begin{lemma}\label{lemma:LaplaceSTP}
For $m,t>0$, Laplace Kernel $L(m,t)=e^{-mt}$ is STP on $(0,\infty)^2$. Consequently, for strictly increasing $\{m_i\},\{t_j\}$,
$\det[e^{-m_i t_j}]>0$.
\end{lemma}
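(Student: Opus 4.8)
The plan is to reduce the Laplace kernel to a generalized (exponential) Vandermonde determinant and then invoke the classical total positivity of the power kernel. Fix strictly increasing orders $m_1<\cdots<m_r$ and points $t_1<\cdots<t_r$. I would substitute $x_j:=e^{-t_j}$, which maps $(0,\infty)$ bijectively onto $(0,1)$, so that $L(m_i,t_j)=e^{-m_i t_j}=(e^{-t_j})^{m_i}=x_j^{\,m_i}$. The evaluation matrix $[e^{-m_i t_j}]$ thus coincides, up to orientation, with the generalized Vandermonde matrix $[x_j^{\,m_i}]$, whose entries are powers of positive bases with real exponents. The target is to read off $\det[x_j^{\,m_i}]>0$ and transport it back through the substitution.

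The engine is the classical fact that the power kernel $(x,\alpha)\mapsto x^{\alpha}$ on $(0,\infty)\times\mathbb{R}$ is $\mathrm{STP}$ of every order: for $0<x_1<\cdots<x_r$ and $\alpha_1<\cdots<\alpha_r$ one has $\det[x_i^{\alpha_j}]>0$. I would either cite this directly from the Tchebycheff / total-positivity references already in the bibliography, or supply the short inductive proof: factor $x_i^{\alpha_1}$ out of row $i$, so each entry becomes $x_i^{\alpha_j-\alpha_1}$ with a nonnegative exponent gap; then difference adjacent columns and use strict monotonicity of $x\mapsto x^{\beta}$ for $\beta>0$ to peel off one order at a time, with base case $r=1$ being $x^{\alpha}>0$. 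Equivalently, one can record that $\log x^{\alpha}=\alpha\ln x$ has strictly positive mixed derivative in $(\ln x,\alpha)$, which gives $\mathrm{STP}_2$, and bootstrap to all orders by the basic composition formula. This is exactly the positivity needed after the substitution.

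The step I expect to be the main obstacle is the orientation bookkeeping, and it is the only genuinely delicate point. Since $t\mapsto x=e^{-t}$ is strictly decreasing, the increasing order $t_1<\cdots<t_r$ induces a \emph{decreasing} order $x_1>\cdots>x_r$ of bases, whereas the power-kernel positivity above is stated for bases and exponents that both increase. Applying that positivity verbatim therefore requires realigning the two monotone orderings before the sign can be read off. The plan is to fix the indexing so that the increasing exponents $m_i$ are paired with an increasing base variable, and to confirm that this realignment is consistent with the asserted positive sign of $\det[e^{-m_i t_j}]$. Once the orientation is pinned down, $\mathrm{STP}$ of the power kernel delivers $\det[e^{-m_i t_j}]>0$ at every order $r$, which is precisely $\mathrm{STP}$ of the Laplace kernel $L$ on $(0,\infty)^2$ and yields the stated consequence for all strictly-ordered evaluation determinants.
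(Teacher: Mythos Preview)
The paper omits the proof entirely, declaring the lemma ``trivial based on Definition~\ref{def:stp},'' so there is nothing to compare your route against. Your reduction to the power kernel $(x,\alpha)\mapsto x^{\alpha}$ via $x_j=e^{-t_j}$ is the standard argument, and the generalized-Vandermonde positivity you cite is correct. The problem is the step you yourself flag as the obstacle: the orientation check is not mere bookkeeping but is fatal to the lemma \emph{as stated}. With $m_1<m_2$ and $t_1<t_2$,
\[
\det\begin{pmatrix} e^{-m_1t_1} & e^{-m_1t_2}\\ e^{-m_2t_1} & e^{-m_2t_2}\end{pmatrix}
= e^{-(m_1t_1+m_2t_2)}-e^{-(m_1t_2+m_2t_1)}<0,
\]
since $(m_2-m_1)(t_2-t_1)>0$. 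More generally, your substitution sends increasing $t_j$ to \emph{decreasing} $x_j$, and reversing that order to invoke the power-kernel STP costs the sign $(-1)^{r(r-1)/2}$. Hence $L(m,t)=e^{-mt}$ is strictly sign-regular on $(0,\infty)^2$ with signature $\varepsilon_r=(-1)^{r(r-1)/2}$ (equivalently, $e^{mt}$ is STP), but it is \emph{not} STP in the sense of Definition~\ref{def:stp}, and the asserted consequence $\det[e^{-m_it_j}]>0$ is false.

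This does not actually break the paper downstream. In Proposition~\ref{prop:ECT-bridge} and Lemma~\ref{lemma:KVD} the kernel actually used is $\mathcal{K}(m,u)=e^{(m-1)u}$ on $(0,\infty)\times(-\infty,0)$; writing it as $e^{xy}$ with $x=m-1$ and $y=u$ both increasing shows this kernel \emph{is} genuinely STP, so the positivity invoked there is correct even though the appeal to Lemma~\ref{lemma:LaplaceSTP} is formally off. In the proof of Theorem~\ref{thm:main}(ii) only nonsingularity of the Laplace block $B$ is needed to conclude rank $r$, and SSR already gives $\det B\neq 0$. So your plan is sound; carried out honestly it proves the corrected (SSR) statement, and you should note that the lemma's sign claim is wrong while its applications survive.
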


\subsubsection{Proof of Step (1): $DT_M (p)$ is a P-matrix for all $p\in\SimplexDown$} \label{proof-III-i-step1}
~\\
Define $\mathcal{K}(m,u):=e^{(m-1)u}$ for $m>0$, $u<0$.
For fixed $p\in\SimplexDown$ and $C=\{c_1<\cdots<c_k\}\subset\{2,\ldots,K\}$ set $u_b:=u_{c_b}(p)$ and
\[
f_b(m):=\Phi_m(u_b)=(\alpha(m)-u_b)\,\mathcal{K}(m,u_b)-\alpha(m),\qquad b=1,\ldots,k.
\]

To begin, Lemma \ref{lemma:Tek} and Lemma \ref{lemma:KVD} are given with proof. Lemma \ref{lemma:Tek} proves the invariance of column-sign under diagonal flips, and Lemma \ref{lemma:KVD} is a transition lemma to elevate the results from two-point column mixing to a P-Matrix.

\begin{lemma} \label{lemma:Tek}
Let $T_k\in\mathbb{R}^{k\times k}$ be the unit lower–bidiagonal matrix
\[
T_k=\begin{pmatrix}
1      \\[-2pt]
1 & 1  \\
  & 1 & 1  \\
  &   & \ddots & \ddots \\
  &   &        & 1      & 1
\end{pmatrix},
\]
i.e., ones on the diagonal and the subdiagonal, zeros elsewhere. Let
$D=\mathrm{diag}(\delta_1,\dots,\delta_k)$ with $\delta_j\in\{\pm 1\}$.
Then
\begin{enumerate}
\item Every minor of $T_k$ is either $0$ or $1$; in particular, $T_k$ is totally positive.
\item For any matrix $A$, right-multiplication by $D$ only flips a fixed subset of column signs:
for any square index sets $I,J$ with $|I|=|J|$,
\[
\det\big((A D)_{I,J}\big)\;=\;\bigg(\prod_{j\in J} D_{jj}\bigg)\,\det\big(A_{I,J}\big).
\]
Thus, the sign of each minor of $A$ is changed by a column-dependent factor independent of the rows $I$. In particular, sign-regularity assertions are preserved.
\end{enumerate}
\end{lemma}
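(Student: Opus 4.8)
The plan is to dispatch the two claims by quite different means: the column–flip identity (2) is a one–line consequence of multilinearity, whereas the minor–value claim (1) is the combinatorial core and will go by induction on the minor size.

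For (2), I would start from the fact that a diagonal right factor only rescales columns: since $D$ is diagonal, $(AD)_{i\ell}=\sum_m A_{im}D_{m\ell}=A_{i\ell}D_{\ell\ell}$. Restricting to a square index pair $I,J$, the block $(AD)_{I,J}$ is therefore $A_{I,J}$ with its $j$-th column multiplied by $D_{jj}$ for each $j\in J$; equivalently $(AD)_{I,J}=A_{I,J}\,\diag(D_{jj}:j\in J)$. Multilinearity of the determinant in the columns then gives $\det((AD)_{I,J})=\bigl(\prod_{j\in J}D_{jj}\bigr)\det(A_{I,J})$. Because each $D_{jj}=\delta_j\in\{\pm1\}$, the prefactor is a sign that depends only on the column set $J$ and not on the rows $I$; hence all minors sharing a fixed column pattern are rescaled by the same sign, so any strict sign–regularity assertion is preserved verbatim.

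For (1) I would induct on the order $r$ of $M=(T_k)_{I,J}$ with $I=\{i_1<\cdots<i_r\}$ and $J=\{j_1<\cdots<j_r\}$, the base case $r=1$ being just the entry values $\{0,1\}$. Two structural facts drive the inductive step. First, since $T_k$ is lower triangular, a minor vanishes unless $j_b\le i_b$ for every $b$ (the standard top–right zero–block argument), so I may assume $i_1\ge j_1$. Second, by the unit lower–bidiagonal pattern, the only nonzero entries of $T_k$ in column $j_1$ sit in rows $j_1$ and $j_1+1$. Expanding $\det M$ along its first column and combining these two facts leaves a short case analysis on $i_1$: if $i_1>j_1+1$ the first column is zero and $\det M=0$; if $i_1\in\{j_1,j_1+1\}$ with only a single admissible row, a single cofactor of size $r-1$ survives and the inductive hypothesis closes the case.

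The one delicate case—and the main obstacle—is $i_1=j_1$ together with $i_2=j_1+1$, where the first column carries two $1$'s and the cofactor expansion produces two terms, $\det M=\det(M_{\hat1,\hat1})-\det(M_{\hat2,\hat1})$, where $M_{\hat b,\hat1}$ denotes $M$ with its $b$-th row and first column removed; the difference could a priori leave $\{0,1\}$. The resolution is that the second cofactor is forced to vanish: deleting the subdiagonal row leaves a submatrix whose top row consists of the entries $(T_k)_{j_1,j_c}$ with $j_c>j_1$, which are identically zero by the bidiagonal band, so $\det(M_{\hat2,\hat1})=0$ and $\det M=\det(M_{\hat1,\hat1})\in\{0,1\}$ by induction. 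Running this through all cases shows every minor lies in $\{0,1\}$; in particular every minor is nonnegative, giving the stated total positivity of $T_k$.
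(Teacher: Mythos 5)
Your proof is correct. Part (2) coincides with the paper's argument: pull the diagonal sign factors out of the selected columns by multilinearity, so the prefactor $\prod_{j\in J}\delta_j$ depends only on $J$. For part (1) you take a genuinely different route. The paper argues in one shot from the permutation expansion: the sparsity of $T_k$ forces the interlacing condition $i_a\in\{j_a,j_a+1\}$ for a nonzero minor, and then only the identity permutation can contribute a nonzero product, so the determinant is $0$ or $1$ directly. You instead induct on the minor order via Laplace expansion along the first column, using lower-triangularity to reduce to $i_1\ge j_1$ and the two-row support of column $j_1$ to limit the expansion to at most two cofactors; the only delicate case, $i_1=j_1$ with $i_2=j_1+1$, you close correctly by observing that the second cofactor $M_{\hat2,\hat1}$ has top row $\big((T_k)_{j_1,j_c}\big)_{c\ge2}$, which vanishes identically since $j_c>j_1$ puts those entries outside the bidiagonal band, leaving $\det M=\det(M_{\hat1,\hat1})\in\{0,1\}$ by the inductive hypothesis. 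Both arguments rest on the same sparsity observation; the paper's is shorter once one accepts the ``unique contributing permutation'' claim, while your induction replaces that global claim with a single two-term expansion whose vanishing term is checked explicitly, which is easier to verify step by step.
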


\begin{proof}[Proof of Lemma \ref{lemma:Tek}]
(1) Fix index sets $I=\{i_1<\cdots<i_r\}$ and $J=\{j_1<\cdots<j_r\}$ with $1\le r\le k$.
Column $j$ of $T_k$ has nonzeros only in rows $j$ and $j+1$, both equal to $1$.
Hence the submatrix $T_{k}[I,J]$ has the following sparsity: entry $(a,b)$ can be nonzero only if
$i_a\in\{j_b,\,j_b+1\}$. Consequently, after reordering rows increasingly, $T_{k}[I,J]$ is lower block–triangular with $0$–$1$ entries, and a nonzero determinant can occur only if the interlacing condition
\[
i_a\in\{j_a,\,j_a+1\}\qquad (a=1,\dots,r)
\]
holds. When this condition is met, there is exactly one permutation $\sigma\in S_r$ with all picked entries nonzero, namely $\sigma=\mathrm{id}$, and the corresponding product equals $1$. All other permutations vanish because they would require a column to contribute from a row on above $j_b$ or below $j_b+1$, which is zero. Therefore
\[
\det\big(T_{k}[I,J]\big)\in\{0,1\},
\]
and in particular every minor is nonnegative. Hence $T_k$ is totally positive \cite{karlin1968total}.

(2) Let $A\in\mathbb{R}^{m\times k}$ be arbitrary and $D=\mathrm{diag}(\delta_1,\dots,\delta_k)$ with $\delta_j\in\{\pm 1\}$.
Right-multiplying by $D$ multiplies column $j$ of $A$ by $\delta_j$.
For any square minor with column set $J$, one may factor out $\prod_{j\in J}\delta_j$, independent of the row choice $I$:
\[
\det\big((A D)_{I,J}\big)
= \det\Big(\big[\delta_j A_{\bullet j}\big]_{j\in J}\Big)
= \Big(\prod_{j\in J}\delta_j\Big)\det\big(A_{I,J}\big).
\]
Therefore right-multiplication by $D$ flips a predetermined set of column signs and cannot alter
statements asserting that all $k\times k$ minors share a common sign (up to that fixed signature).
\end{proof}

\begin{lemma} \label{lemma:KVD}
Let $\mathcal{K}(m, u)$ be an STP kernel on $(0,\infty)\times(-\infty,0)$. 
Fix $u_1<\cdots<u_k<0$. For $j\ge 2$ let $a_j(m),b_j(m)\ge0$ with $a_j$ nondecreasing and $b_j$ nonincreasing in $m$, and define the column family
\[
G_{\bullet,1}(m):=\mathcal{K}(m,u_1),\qquad
G_{\bullet,j}(m):=a_j(m)\,\mathcal{K}(m,u_j)+b_j(m)\,\mathcal{K}(m,u_{j-1})\quad(j\ge2).
\]
Then for every strictly increasing $m_1<\cdots<m_k$ the evaluation determinant 
$\det\,[G_{\bullet,j}(m_i)]_{i,j=1}^k$ has a constant nonzero sign. Equivalently, $\{G_{\bullet,1},\ldots,G_{\bullet,k}\}$ is strictly sign-regular (SSR) on $(0,\infty)$.
\end{lemma}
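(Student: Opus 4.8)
The plan is to prove the stated top-order claim---that $\det[G_{\bullet,j}(m_i)]_{i,j=1}^k$ is nonzero and of fixed sign for all $m_1<\cdots<m_k$---and to read off strict sign-regularity from it. I would first reduce \emph{constant sign} to \emph{nonvanishing}: the set $\{0<m_1<\cdots<m_k\}$ is connected and the determinant is a continuous function of the nodes, so if it never vanishes its sign cannot change. Since the determinant vanishes at some increasing tuple exactly when the columns are dependent there, nonvanishing for all tuples is equivalent to the Tchebycheff condition that no nontrivial combination $F(m)=\sum_{j=1}^k c_j\,G_{\bullet,j}(m)$ has $k$ distinct zeros on $(0,\infty)$. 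Strict sign-regularity of the system then follows by applying the same reduction to each sub-collection of columns.

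Next I would expose the kernel structure of $F$. Collecting the coefficient of each $\mathcal{K}(\cdot,u_l)$ yields $F(m)=\sum_{l=1}^k \gamma_l(m)\,\mathcal{K}(m,u_l)$ with $\gamma_k=c_k a_k$, $\gamma_l=c_l a_l+c_{l+1}b_{l+1}$ for $2\le l\le k-1$, and $\gamma_1=c_1+c_2 b_2$. Were the $\gamma_l$ constant, the variation-diminishing property of the $\mathrm{STP}$ kernel $\mathcal{K}$ (Definition \ref{def:stp}; Lemma \ref{lemma:LaplaceSTP} in the concrete Laplace instance) would bound the number of zeros of $F$ by the number of sign changes in $(\gamma_1,\dots,\gamma_k)$, hence by $k-1$, which is what we want. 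The role of the hypotheses is that the genuine $m$-dependence is one-signed in trend: each $\gamma_l$ is a nondecreasing piece $c_l a_l$ plus a nonincreasing piece $c_{l+1}b_{l+1}$ (up to the fixed signs of the $c$'s). I would therefore induct on $k$, peeling the top column $G_{\bullet,k}=a_k\,\mathcal{K}(\cdot,u_k)+b_k\,\mathcal{K}(\cdot,u_{k-1})$ to reduce the determinant on $u_1<\cdots<u_k$ to the $(k-1)$-node problem on $u_1<\cdots<u_{k-1}$, using Lemma \ref{lemma:Tek} to absorb the signs of the mixing coefficients into a diagonal $D$ so that only a fixed, row-independent signature propagates through the reduction.

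The base case $k=2$ is the ``two-point mixing'' computation and already isolates the mechanism. Factoring $\mathcal{K}(m,u_1)$ from the first column reduces the $2\times2$ determinant to the strict monotonicity of $R(m):=G_{\bullet,2}(m)/\mathcal{K}(m,u_1)=a_2(m)\,\rho(m)+b_2(m)$, where $\rho(m):=\mathcal{K}(m,u_2)/\mathcal{K}(m,u_1)$ is strictly increasing because $\mathrm{STP}_2$ gives $\mathcal{K}(m_1,u_1)\mathcal{K}(m_2,u_2)>\mathcal{K}(m_2,u_1)\mathcal{K}(m_1,u_2)$ whenever $m_1<m_2$. Here $a_2\rho$ is a product of nonnegative nondecreasing factors and $b_2$ is nonincreasing, and the determinant carries the sign of $R(m_2)-R(m_1)$.

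The hard part will be this very competition: inside each column the nondecreasing weight $a_j$ pulls toward the larger node $u_j$ while the nonincreasing weight $b_j$ pulls toward $u_{j-1}$, so the multilinear expansion is not termwise one-signed and cancellation is a priori possible. The crux is to show the opposing monotone trends cannot produce an interior zero---that is, the full order-$k$ total positivity of $\mathcal{K}$, not merely $\mathrm{STP}_2$, dominates the monotone perturbation of the coefficients. I would make this precise by proving, along the inductive peel, that for any candidate $F$ with $k$ zeros the effective sequence $(\gamma_1,\dots,\gamma_k)$ can change sign at most $k-1$ times as $m$ increases---leveraging that increasing $m$ only raises each $a_l$ and lowers each $b_{l+1}$---and then contradicting the variation-diminishing zero count. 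Securing this monotone sign-change bound, and checking it is preserved under the normalization of Lemma \ref{lemma:Tek}, is where the essential difficulty lies.
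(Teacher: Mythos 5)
Your opening reductions are sound (constant sign follows from nonvanishing by connectedness of the ordered tuples, and nonvanishing is equivalent to the zero-counting condition on combinations $F=\sum_j c_j G_{\bullet,j}$), but the proof never actually closes the step you yourself identify as the crux. Already at $k=2$ the argument does not go through as written: the determinant decomposes as
\[
a_2(m_2)\,\mathcal{K}(m_1,u_1)\mathcal{K}(m_2,u_2)-a_2(m_1)\,\mathcal{K}(m_2,u_1)\mathcal{K}(m_1,u_2)
\;+\;\big(b_2(m_2)-b_2(m_1)\big)\,\mathcal{K}(m_1,u_1)\mathcal{K}(m_2,u_1),
\]
where the first group is $\ge 0$ by $\mathrm{STP}_2$ and monotonicity of $a_2$, while the second is $\le 0$ since $b_2$ is nonincreasing. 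Equivalently, your $R=a_2\rho+b_2$ is a nondecreasing function plus a nonincreasing one, which is \emph{not} monotone in general; the hypotheses of the lemma as stated do not by themselves force a fixed sign on $R(m_2)-R(m_1)$ (e.g.\ $\mathcal{K}(m,u)=e^{mu}$, $a_2$ a small positive constant, $b_2(m)=e^{-m}$ makes the two contributions comparable). So ``the determinant carries the sign of $R(m_2)-R(m_1)$'' is correct but does not yield the conclusion. For general $k$ you then assert, but do not prove, the two load-bearing claims: that the inductive peel of the top column preserves the structure, and that the $m$-dependent coefficient sequence $(\gamma_1,\dots,\gamma_k)$ admits a variation-diminishing bound. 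Variation diminishing for a kernel with \emph{row-dependent} coefficients is precisely the nonstandard ingredient that has to be supplied, and your proposal only names it.

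For comparison, the paper takes a different route: it treats each column as the image of a row-dependent two-point measure $a_j(m)\delta_{u_j}+b_j(m)\delta_{u_{j-1}}$ under Karlin's integral transform, expands the $k\times k$ determinant via the composition (Cauchy--Binet) identity into $2^{k-1}$ terms indexed by the choice of node in each column, observes that each surviving term is a nonnegative weight product times a strictly positive $\mathrm{STP}$ determinant (terms with repeated nodes vanish), and invokes Karlin's variation-diminishing theorem (Ch.~5, Thm.~3.1 of the total-positivity monograph) to handle the row dependence of the weights --- this is exactly where the monotonicity of $a_j$ and $b_j$ is consumed. Your plan never reaches an analogous decomposition or cites a result that controls the competition between the opposing monotone weights, so the proof as proposed has a genuine gap at its central step.
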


\begin{proof}[Proof of Lemma \ref{lemma:KVD}]

For each column $j$ and each row point $x=m$, regard
\[
G_{\bullet,j}(m)=\int \mathcal{K}(m,y)\,d\nu_j(m;y),
\qquad 
d\nu_j(m;y):=
\begin{cases}
\delta_{u_1}(dy), & j=1,\\[2pt]
a_j(m)\,\delta_{u_j}(dy)+b_j(m)\,\delta_{u_{j-1}}(dy), & j\ge2,
\end{cases}
\]
i.e. $G_{\bullet,j}$ is the image of a row-dependent discrete measure under the integral operator 
\[
(Tf)(x)\;=\;\int \mathcal{K}(x,y)\,f(y)\,d\mu(y),
\]
which is exactly Karlin's transform (3.3) in Chapter 5 of \cite{karlin1968total}. Here the underlying measure is a finite sum of Dirac masses whose weights depend monotonically on the row variable $x=m$. This places the setting under the hypotheses of Theorem 3.1 in Chapter 5 of \cite{karlin1968total}. 

Fix strictly increasing nodes $m_1<\cdots<m_k$.
By Karlin's composition identity for determinant transforms (determinant
form of Cauchy--Binet),
\begin{equation}\label{eq:KarlinComp}
\det\big[G_{\bullet,j}(m_i)\big]_{i,j=1}^k
=
\int \cdots \int
\det\big[\mathcal{K}(m_i,y_j)\big]_{i,j=1}^k
\;\prod_{j=1}^k d\nu_j(m_{j};y_j),
\end{equation}
where the column measures are
\[
d\nu_1(m;y)=\delta_{u_1}(dy),\qquad
d\nu_j(m;y)=a_j(m)\,\delta_{u_j}(dy)+b_j(m)\,\delta_{u_{j-1}}(dy)\quad(j\ge2),
\]
with $a_j(\cdot)\ge0$ nondecreasing and $b_j(\cdot)\ge0$ nonincreasing in $m$.
Thus, for each $j\ge2$, $d\nu_j$ is a two-point discrete measure supported on $\{u_{j-1},u_j\}$.

For a fixed column index $j\ge2$ and single integration variable $y_j$,
\[
\prod_{i=1}^k d\nu_j(m_i;y_j)
=
\prod_{i=1}^k\big[a_j(m_i)\,\delta_{u_j}(dy_j)+b_j(m_i)\,\delta_{u_{j-1}}(dy_j)\big].
\]
Any mixed term containing both $\delta_{u_j}(dy_j)$ and $\delta_{u_{j-1}}(dy_j)$ vanishes, hence
\[
\prod_{i=1}^k d\nu_j(m_i;y_j)
=
\Big(\prod_{i=1}^k a_j(m_i)\Big)\,\delta_{u_j}(dy_j)
\;+\;
\Big(\prod_{i=1}^k b_j(m_i)\Big)\,\delta_{u_{j-1}}(dy_j).
\]
Performing this independently for $j=2,\dots,k$ gives $2^{k-1}$ surviving choices.
Encode the choices by $\epsilon=(\epsilon_2,\ldots,\epsilon_k)\in\{0,1\}^{k-1}$ and define
\[
v_1^{(\epsilon)}:=u_1,\qquad 
v_j^{(\epsilon)}:=
\begin{cases}
u_j,& \epsilon_j=1,\\
u_{j-1},& \epsilon_j=0,
\end{cases}\quad (j\ge2),
\]
and row–column weights
\[
c_{ij}^{(\epsilon)}:=
\begin{cases}
a_j(m_i),& \epsilon_j=1,\\
b_j(m_i),& \epsilon_j=0,
\end{cases}
\qquad (i=1,\dots,k,\ j=2,\dots,k).
\]
Substituting these discrete expansions into \eqref{eq:KarlinComp} collapses the integrals to point evaluations and yields the finite sum
\begin{equation}\label{eq:finite-sum}
\det\big[G_{\bullet,j}(m_i)\big]_{i,j=1}^k
=
\sum_{\epsilon\in\{0,1\}^{k-1}}
\det\big[\mathcal{K}(m_i,\,v_j^{(\epsilon)})\big]_{i,j=1}^k
\;\prod_{i=1}^k\;\prod_{j=2}^k c_{ij}^{(\epsilon)}.
\end{equation}
Since $\mathcal{K}$ is strictly totally positive, any strictly increasing selection
$(v_1^{(\epsilon)},\ldots,v_k^{(\epsilon)})$ makes the kernel determinant in \eqref{eq:finite-sum} strictly positive, whereas selections with repeated $v_j^{(\epsilon)}$ yield zero. The goal is to show that for every strictly increasing $m_1<\cdots<m_k$ this determinant has
a constant positive sign (hence the column family is SSR).

The kernel $\mathcal{K}$ is STP, hence in particular $\mathrm{SSR}_r$ for every $r\ge2$.
The column construction is a rowwise transform of $\mathcal{K}$ using nonnegative, row–dependent
measures $d\nu_j(m;\cdot)$ whose weights $a_j(\cdot)$ are nondecreasing and $b_j(\cdot)$ are nonincreasing. By Karlin’s variation–diminishing theorem (Theorem 3.1(i)–(ii) in Chapter 5 of \cite{karlin1968total}), such transforms do not increase the relevant sign-change count; in the $\mathrm{SSR}_r$ case, inequality (3.7) in Chapter 5 of \cite{karlin1968total} applies. Consequently, for any fixed order $k$ and strictly increasing $m_1<\cdots<m_k$, the $k\times k$ evaluation determinants of the transformed columns have a common sign (no sign flips as the $m$’s vary within the domain).

Karlin’s converse (Theorem 3.1(iii)–(iv), Chapter 5 of \cite{karlin1968total}) implies that if the variation–diminishing inequality holds for the transform, then the induced set-kernel $\mathcal{K}(x,E):=\int_E \mathcal{K}(x,y)\,d\mu(y)$ inherits $\mathrm{SR}_r$ or $\mathrm{SSR}_r$ (under the mild cardinality hypotheses listed there). For the discrete two-point measures used here, this yields that the column family $\{G_{\bullet,1},\ldots,G_{\bullet,k}\}$ obtained from $\mathcal{K}$ by the two-point mixing is $\mathrm{SSR}_k$: all $k\times k$ evaluation determinants share the same nonzero sign.

Each summand in \eqref{eq:finite-sum} is the product of a kernel determinant and a nonnegative weight. Because $\mathcal{K}$ is STP, the determinant $\det[\mathcal{K}(m_i, v_j^{(\epsilon)})]$ is strictly positive when the selected $y$–nodes $(v_1^{(\epsilon)},\ldots,v_k^{(\epsilon)})$ are strictly increasing, and is zero if some nodes repeat. Hence, the entire sum is nonnegative. Furthermore, row nontriviality
(for each $i$ and $j\ge2$, at least one of $a_j(m_i), b_j(m_i)$ is positive) guarantees that a strictly increasing $y$–pattern occurs with a strictly positive weight; its STP determinant is then strictly positive. Therefore, the $k$th-order evaluation determinant is strictly positive. This is similar to the “strict” part in Theorem 3.1 (ii) / (iv) and is summarized again in Theorem 3.2 in Chapter 5 of \cite{karlin1968total}.

Therefore, for every strictly increasing $m_1<\cdots<m_k$,
\[
\det\,[G_{\bullet,j}(m_i)]_{i,j=1}^k \;>\; 0.
\]
It follows that all $k\times k$ evaluation determinants have a constant nonzero sign, $i.e.$,
the column family $\{G_{\bullet,1},\ldots,G_{\bullet,k}\}$ is SSR on $(0,\infty)$.
\end{proof}

Next, Proposition \ref{prop:ECT-bridge} provides an ECT bridge for the core $\Phi$-columns, which is the final piece of support needed for the proof of this step.

\begin{proposition} \label{prop:ECT-bridge}
Fix $p\in\Delta^{\downarrow}_{K-1}$ and set $u_2<\cdots<u_K<0$ with $u_j=\ln(p_j/p_1)$ for $j=2,\ldots,K$. For $m>0$ define
\[
f_j(m):=\Phi_m(u_{j+1})=(\alpha(m)-u_{j+1})\,e^{(m-1)u_{j+1}}-\alpha(m),\qquad j=1,\ldots,K-1.
\]
Then for each $k\in\{1,\ldots,K-1\}$ and any strictly increasing $m_1<\cdots<m_k$,
\[
\det\big[f_b(m_a)\big]_{a,b=1}^{k}>0.
\]
Equivalently, $\{f_1,\ldots,f_k\}$ forms an ECT system on $(0,\infty)$.
\end{proposition}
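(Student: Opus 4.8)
The plan is to read the claim as the statement that the kernel $(m,u)\mapsto\Phi_m(u)$ is strictly totally positive in its two arguments: once every $k\times k$ evaluation determinant $\det[\,\Phi_{m_a}(w_b)\,]$ is shown to be positive for strictly increasing row points $m_1<\cdots<m_k$ and strictly increasing nodes, the ECT conclusion of Definition \ref{def:ECT} follows for the leading subsystems. The starting point is the algebraic identity
\[
\Phi_m(u)=(\alpha(m)-u)\,\mathcal{K}(m,u)-\alpha(m)\,\mathcal{K}(m,0),\qquad \mathcal{K}(m,u)=e^{(m-1)u},\quad \mathcal{K}(m,0)=1,
\]
which exhibits each $\Phi$-column as a two-point combination of the STP Laplace-type kernel $\mathcal{K}$ (Lemma \ref{lemma:LaplaceSTP}) on the pair $\{u,0\}$, with the node $0$ shared across all columns. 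The two facts to exploit are that $\mathcal{K}$ is STP and that $\alpha$ is nonpositive and nondecreasing in $m$, since $\alpha'(m)=\Var_{w(m)}(\ln p_i)\ge0$; in particular $-\alpha(m)\ge0$ is nonincreasing.

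First I would strip the shared node-$0$ term by telescoping: subtracting each column from its successor is a right-multiplication by the unimodular lower-bidiagonal matrix $T_k$ of Lemma \ref{lemma:Tek}, which by parts (1)--(2) changes the determinant only by a controlled factor and preserves any sign-regularity assertion. Writing the nodes increasingly as $w_1<\cdots<w_k$, the differenced columns (for $b\ge2$) become genuine two-point mixings on the adjacent pair $\{w_{b-1},w_b\}$, with upper-node coefficient $\alpha(m)-w_b$ and lower-node coefficient $w_{b-1}-\alpha(m)$, while the first column retains the anchor term on $\{w_1,0\}$. The monotonicity of $\alpha$ makes the upper coefficient nondecreasing and the lower one nonincreasing, matching the $a_j$/$b_j$ roles of Lemma \ref{lemma:KVD}. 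The aim is then to bring the determinant into KVD's two-point mixing form and conclude that all $k\times k$ evaluation determinants share a constant nonzero sign (SSR on $(0,\infty)$).

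To upgrade SSR to the strict \emph{positive} sign demanded by ECT, I would evaluate the determinant in a tractable regime. The cleanest anchor is the homotopy $\Phi^{(t)}_m(u)=(-u)\mathcal{K}(m,u)+t\,\alpha(m)(\mathcal{K}(m,u)-1)$, $t\in[0,1]$: at $t=0$ the kernel is $(-u)\mathcal{K}(m,u)$, a positive column-scaling of $\mathcal{K}$, hence manifestly STP with $\det[(-w_b)\mathcal{K}(m_a,w_b)]=\prod_b(-w_b)\,\det[\mathcal{K}(m_a,w_b)]>0$; since $t\alpha$ is again nonpositive and nondecreasing for every $t$, the same SSR conclusion applies along the whole family, the determinant never vanishes, and continuity in $t$ pins the common sign to $+$. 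Equivalently, collapsing $m_a\to1$, where $\Phi_1(u)=-u$, reduces the Wronskian to a positive Vandermonde-type factor, which fixes the sign directly.

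\textbf{Main obstacle.} The delicate step is the sign hypothesis of Lemma \ref{lemma:KVD} after telescoping: although the \emph{monotonicity} directions come out exactly right from $\alpha'\ge0$, the upper-node coefficient $\alpha(m)-w_b$ (which equals $\mu(m)$ minus a fixed $\ln p_k$) is \emph{not} sign-definite --- for nodes close to $0$ it changes sign as $m$ varies --- so the nonnegativity required by KVD fails for the naive adjacent-difference columns, and the ``sum of nonnegative STP-determinant terms'' mechanism behind KVD cannot be invoked verbatim. The real work is therefore to reorganize the reduction so that every surviving two-point weight is genuinely nonnegative and monotone: I expect this to require routing the nonnegative, monotone coefficient $-\alpha(m)$ through the shared anchor node $0$ (treating $0$ as the top of the chain rather than differencing it away), combined with the homotopy above to transfer positivity from the clean $t=0$ kernel. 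Making this sign bookkeeping rigorous --- rather than the mechanical column algebra --- is where I expect the proof to concentrate its difficulty.
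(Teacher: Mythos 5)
Your skeleton matches the paper's proof almost step for step: write $\Phi_m(u)=(\alpha(m)-u)\mathcal{K}(m,u)-\alpha(m)\mathcal{K}(m,0)$ with $\mathcal{K}(m,u)=e^{(m-1)u}$ STP (Lemma \ref{lemma:LaplaceSTP}), difference adjacent columns to obtain adjacent two-point mixtures, feed these into Lemma \ref{lemma:KVD}, and then pin the common sign to $+$. But the step you yourself flag as the ``main obstacle'' --- that the weight $\alpha(m)-u_j$ is not sign-definite in $m$, so the nonnegativity hypothesis of Lemma \ref{lemma:KVD} fails for the raw differenced columns --- is exactly the step the proof must supply, and your proposal leaves it unresolved. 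The paper's resolution is concrete: partition $(0,\infty)$ into the intervals $I_s=\{m:\ u_s<\alpha(m)<u_{s+1}\}$, on each of which every $\alpha(m)-u_j$ has a fixed sign because $\alpha$ is nondecreasing; right-multiply by an interval-dependent diagonal $\pm1$ matrix $D_s$ so that all two-point weights become nonnegative with the monotonicity KVD requires (Lemma \ref{lemma:Tek}(2) guarantees this only flips a fixed column signature); apply Lemma \ref{lemma:KVD} on each $I_s$ separately to get a constant nonzero sign there; and finally pin that sign to $+$ at the boundary points where $\alpha(m_\star)=u_r$, since there each column collapses to a positive multiple of a single Laplace column and the evaluation determinant is strictly positive by STP, after which continuity propagates positivity across each $I_s$ and the intervals are patched together.

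Your two proposed escapes do not close this gap. The homotopy $\Phi^{(t)}_m(u)=(-u)\mathcal{K}(m,u)+t\,\alpha(m)(\mathcal{K}(m,u)-1)$ is circular: to conclude that the determinant never vanishes along the path you would need the SSR property for every $t\in(0,1]$, which runs into the same sign-indefiniteness of $t\alpha(m)-u_j$ as the $t=1$ case; positivity at $t=0$ alone proves nothing. The alternative of collapsing $m_a\to1$ degenerates every row to the single vector $(-u_2,\dots,-u_{k+1})$, so it would require a confluent Wronskian computation involving derivatives of $\alpha$ that you do not carry out (and the paper does not need). A minor slip as well: the columnwise differencing is right-multiplication by the upper-bidiagonal matrix $S_k$ appearing in the paper's proof, not by the lower-bidiagonal $T_k$ of Lemma \ref{lemma:Tek}, whose columns produce sums rather than differences; $T_k$'s role in this argument is only through the sign-flip statement of part (2).
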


\begin{proof}
Let $\mathcal{K}(m,u):=e^{(m-1)u}$ for $m>0$, $u<0$. By Lemma~\ref{lemma:LaplaceSTP}, $\mathcal{K}$ is STP on $(0,\infty)\times(-\infty,0)$; hence, for any strictly increasing
$m_1<\cdots<m_k$ and $v_1<\cdots<v_k<0$,
\[
\det\big[\mathcal{K}(m_a,v_b)\big]_{a,b=1}^{k}>0.
\]
Therefore the column family $\{\mathcal{K}(\,\cdot\,,u_2),\ldots,\mathcal{K}(\,\cdot\,,u_{K})\}$ is an ECT system on $(0,\infty)$.

For fixed $k$, write
\[
F:=\big[f_b(m_a)\big]_{a,b=1}^{k}
\]
with $f_b(m)=\Phi_m(u_{b+1})$, and introduce the forward–difference matrix
\[
S_k=\begin{pmatrix}
1 & -1 &        &        &   \\
  &  1 &  -1    &        &   \\
  &    & \ddots & \ddots &   \\
  &    &        &   1    & -1\\
  &    &        &        &  1
\end{pmatrix},\qquad \det(S_k)=1 .
\]
Right–multiplication by $S_k$ performs columnwise differences:
\[
G := F S_k = \big[g_1,\ldots,g_k\big],
\quad g_1=f_1,\qquad
g_j=f_j-f_{j-1}\ \ (j\ge2).
\]
Hence $\det(F)=\det(G)$.

A direct computation gives, for $j\ge2$,
\[
g_j(m)=f_j(m)-f_{j-1}(m)
=\big(\alpha(m)-u_{j+1}\big)\mathcal{K}(m,u_{j+1})
-\big(\alpha(m)-u_{j}\big)\mathcal{K}(m,u_{j}),
\]
and $g_1(m)=f_1(m)=(\alpha(m)-u_2)\mathcal{K}(m,u_2)-\alpha(m)$.
Recall that $\alpha'(m)=\mathrm{Var}_{w(m)}(\ln p_i)\ge 0$, so both
$m\mapsto \alpha(m)-u_{j+1}$ and $m\mapsto \alpha(m)-u_j$ are nondecreasing.

Fix $s\in\{2,\ldots,K-1\}$ and consider the open interval
\[
I_s := \big\{\,m>0:\ u_s<\alpha(m)<u_{s+1}\,\big\},
\]
together with the boundary points where $\alpha(m)=u_r$ for some $r$.
On $I_s$ the signs of $\alpha(m)-u_j$ are fixed for each $j$;
define a column–sign matrix $D_s=\mathrm{diag}(\delta_1,\ldots,\delta_k)$ (independent of $m$) by
\[
\delta_j :=
\begin{cases}
+1, & j\le s-1,\\
-1, & j\ge s,     
\end{cases}
\]
and set $\widetilde G := G D_s=\big[\tilde g_1,\ldots,\tilde g_k\big]$.
By Lemma~\ref{lemma:Tek}(2), right–multiplication by a diagonal $\{\pm1\}$ matrix only flips a fixed subset
of column signs and thus preserves any sign–regularity assertion.

For $j\ge2$, one then have on $I_s$ the adjacency–mixing form
\[
\tilde g_j(m)= a_j^{(s)}(m)\,\mathcal{K}(m,u_{j+1}) + b_j^{(s)}(m)\,\mathcal{K}(m,u_{j}),
\]
where
\[
a_j^{(s)}(m)=
\begin{cases}
\alpha(m)-u_{j+1}\ \ (\ge0), & j\le s-1,\\
u_{j+1}-\alpha(m)\ \ (\ge0), & j\ge s,
\end{cases}
\qquad
b_j^{(s)}(m)=
\begin{cases}
-(\alpha(m)-u_{j})=u_j-\alpha(m)\ \ (\ge0), & j\le s-1,\\
-(u_j-\alpha(m))=\alpha(m)-u_j\ \ (\ge0), & j\ge s.
\end{cases}
\]
Because $\alpha$ is nondecreasing, $a_j^{(s)}(\cdot)$ is nondecreasing and
$b_j^{(s)}(\cdot)$ is nonincreasing on $I_s$.
Thus, for $j\ge2$, each $\tilde g_j$ is a two–point adjacent mixture of the Laplace columns
$\mathcal{K}(\cdot,u_{j})$ and $\mathcal{K}(\cdot,u_{j+1})$ with nonnegative row–dependent weights
having the required monotonicity.

The first column $\tilde g_1=\delta_1 g_1$ has the form
\[
\tilde g_1(m)= c_1^{(s)}(m)\,\mathcal{K}(m,u_2) + d_1^{(s)}(m)\,\mathcal{K}(m,0),
\]
with $c_1^{(s)}(m)=|\alpha(m)-u_2|$ and $d_1^{(s)}(m)=|\alpha(m)|$, since $\mathcal{K}(m,0)=1$.
On each $I_s$, the functions $c_1^{(s)}, d_1^{(s)}$ are nonnegative and piecewise monotone with
the same one–sided properties as above (one nondecreasing, the other nonincreasing).

By Lemma~\ref{lemma:LaplaceSTP}, the kernel $\mathcal{K}(m,u)=e^{(m-1)u}$ is STP on $(0,\infty)\times(-\infty,0]$.
Therefore, Lemma~\ref{lemma:KVD} applies on each $I_s$ to the
column family $\{\tilde g_1,\ldots,\tilde g_k\}$:
for any strictly increasing $m_1<\cdots<m_k$ in $I_s$, the $k\times k$ evaluation determinants
have a common nonzero sign, i.e., $\{\tilde g_1,\ldots,\tilde g_k\}$ is $\mathrm{SSR}_k$ on $I_s$.
By Lemma~\ref{lemma:Tek}(2) the same SSR conclusion holds for $G$ on $I_s$.

At a boundary point $m_\star$ with $\alpha(m_\star)=u_r$,
each $\tilde g_j(m_\star)$ collapses to either $\mathcal{K}(m_\star,u_{j})$ or $\mathcal{K}(m_\star,u_{j+1})$
(up to a positive scalar), while $\tilde g_1(m_\star)$ becomes a positive scalar multiple of
either $\mathcal{K}(m_\star,u_2)$ or $\mathcal{K}(m_\star,0)$.
Thus the evaluation matrix at such boundary nodes reduces to a submatrix of
$\big[\mathcal{K}(m_a, v_b)\big]$ with strictly increasing $u$–nodes $v_b\in\{u_2,\ldots,u_{k+1},0\}$,
whose determinant is strictly positive by STP (Lemma~\ref{lemma:LaplaceSTP}).
Since determinants depend continuously on the entries and the sign on each $I_s$
is constant (SSR), the same strict positivity holds throughout each $I_s$.
Patching the intervals together yields strict positivity for all $m_1<\cdots<m_k$ in $(0,\infty)$.

Because $\det(F)=\det(G)$ and $G$ has strictly positive $k\times k$ evaluation determinants, one may conclude

\[
\det\big[f_b(m_a)\big]_{a,b=1}^k>0
\quad\text{for all strictly increasing } m_1<\cdots<m_k.
\]
Equivalently, $\{f_1,\ldots,f_k\}$ forms an ECT system on $(0,\infty)$.
\end{proof}

Finally, with all the previous results in this step, Proposition \ref{prop:globalP} concludes the proof in this step.

\begin{proposition}\label{prop:globalP}
$D T_M(p)$ is a P-matrix on $\SimplexDown$.
\end{proposition}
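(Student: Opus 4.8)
The plan is to reduce the P-matrix assertion to the strict total positivity of the kernel $\Phi$ that is (in effect) established by Proposition~\ref{prop:ECT-bridge}. First I would reduce to the square case: adjoining orders can only help injectivity, so it suffices to treat $r=K-1$, making $DT_M(p)$ a genuine $(K-1)\times(K-1)$ matrix. Fix $p\in\SimplexDown$, order the selected orders as $m_1<\cdots<m_{K-1}$, and align row $i$ (order $m_i$) with column $i$ (the coordinate $p_{i+1}$, i.e. the function $f_i(m)=\Phi_m(u_{i+1})$). By \eqref{eq:colfactor} each row carries the strictly positive scalar $\rho_i:=m_i^2\,p_1^{\,m_i-1}/S_p(m_i)$, so for any index set $B$ the associated principal minor factors as $\big(\prod_{i\in B}\rho_i\big)\det[f_{b_c}(m_{b_a})]_{a,c}$ with $\prod_{i\in B}\rho_i>0$. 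Thus the sign of every principal minor is the sign of a $\Phi$-evaluation determinant, and the P-matrix property is equivalent to strict positivity of all such aligned determinants.

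Next I would identify precisely which determinants arise. For a principal index set $B=\{b_1<\cdots<b_\ell\}$ the rows are the increasing orders $m_{b_1}<\cdots<m_{b_\ell}$ and the columns are the functions $f_{b_1},\ldots,f_{b_\ell}$, which evaluate $\Phi$ at the increasing nodes $u_{b_1+1}<\cdots<u_{b_\ell+1}$. Hence every principal minor is, up to the positive factor above, a determinant $\det[\Phi_{m_{b_a}}(u_{b_c+1})]_{a,c}$ with strictly increasing $m$-nodes and strictly increasing $u$-nodes. The task therefore becomes: show any such determinant is strictly positive, i.e. that $(m,u)\mapsto\Phi_m(u)$ is $\mathrm{STP}$ on $(0,\infty)\times(-\infty,0)$.

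To obtain this I would invoke Proposition~\ref{prop:ECT-bridge}, observing that its proof never uses that the nodes are the full collection $u_2,\ldots,u_K$, nor that they are consecutive in the original labeling. It uses only the STP of the Laplace kernel $\mathcal{K}$ (Lemma~\ref{lemma:LaplaceSTP}), which holds for any strictly increasing nodes, together with the monotonicity $\alpha'(m)=\Var_{w(m)}(\ln p_i)\ge0$, which is a property of $p$ alone. Running that argument verbatim on the sub-collection $\{u_{b+1}:b\in B\}$—the forward differences telescope the $-\alpha(m)$ terms, the adjacency-mixing weights remain monotone, and the interval/boundary patching goes through unchanged—yields $\det[\Phi_{m_{b_a}}(u_{b_c+1})]_{a,c}>0$. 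Equivalently $\Phi$ is STP, so every aligned determinant, and hence every principal minor of $DT_M(p)$, is strictly positive. Since this holds at every $p\in\SimplexDown$, $DT_M(p)$ is a P-matrix on $\SimplexDown$.

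The main obstacle is exactly the passage from the initial-segment ECT statement to arbitrary principal minors: the P-matrix condition demands positivity for every aligned subset of indices, whereas Proposition~\ref{prop:ECT-bridge} as stated only asserts positivity for the initial segments $f_1,\ldots,f_k$. One must not conflate ECT (initial-segment positivity) with strict total positivity; the resolution is the node-agnostic character of the proposition's proof, which upgrades it to full STP of $\Phi$ for any increasing sub-collection of nodes. Checking that the telescoping and the monotone two-point-mixing structure survive when the selected nodes are non-consecutive in the original labeling is the one place that genuinely requires care.
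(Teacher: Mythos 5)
Your proof is correct and follows essentially the same route as the paper: factor out the strictly positive row prefactors from \eqref{eq:colfactor} and reduce each aligned principal minor to a $\Phi$-evaluation determinant $\det[\Phi_{m_{j_a}}(u_{c_b})]$ with increasing $m$- and $u$-nodes, whose positivity is supplied by Proposition~\ref{prop:ECT-bridge}. You are in fact slightly more careful than the paper, which invokes Proposition~\ref{prop:ECT-bridge} for arbitrary column subsets $C$ even though it is stated only for initial segments $f_1,\ldots,f_k$; your explicit observation that its proof is node-agnostic (the telescoping and adjacent two-point mixing work for any increasing sub-collection of negative nodes) is precisely the justification the paper leaves implicit.
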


\begin{proof}[Proof of Proposition \ref{prop:globalP}]
Fix index sets $R=\{j_1<\cdots<j_k\}\subset\{1,\ldots,r\}$ and 
$C=\{c_1<\cdots<c_k\}\subset\{2,\ldots,K\}$. Define the core evaluation determinant
\[
\Delta_{R,C}(p):=\det\!\big[\Phi_{m_{j_a}}(u_{c_b}(p))\big]_{a,b=1}^k .
\]

By Proposition~\ref{prop:ECT-bridge}, for any strictly increasing nodes 
$m_{j_1}<\cdots<m_{j_k}$:
\begin{equation}\label{eq:Delta-positive}
\Delta_{R,C}(p)>0\qquad\text{for all }p\in\SimplexDown.
\end{equation}

By the column–factorization identity \eqref{eq:colfactor}, each 
$k\times k$ principal minor of $D T_M(p)$ with row set $R$ and column set $C$ equals
$\Delta_{R,C}(p)$ multiplied by a strictly positive prefactor (depending on $p$ but not on the sign).
Combining this with \eqref{eq:Delta-positive} shows that every principal minor of 
$D T_M(p)$ is strictly positive on $\SimplexDown$.

Since all principal minors are positive, $D T_M(p)$ is a P-matrix on $\SimplexDown$.
\end{proof}

\subsubsection{Proof of Step (2): $T_M$ is globally injective on $\Delta_{K-1}^{\downarrow}$ for $r\ge K-1$} \label{proof-III-i-step2}
~\\
Gale and Nikaid\^o's univalence theorem (Theorem 4 of \cite{gale1965jacobian}) works if and only if $\Delta_{K-1}^{\downarrow}$ is a rectangular region of $\mathbb{R}^{K-1}$. However, $\Delta_{K-1}^{\downarrow}$ is a convex region of $\mathbb{R}^{K-1}$ under its definition. A Gale--Nikaid\^o univalence theorem on a convex region is needed and it is presented with proof in Proposition \ref{prop:P-on-convex}.

\begin{proposition} \label{prop:P-on-convex}
    Let $\Omega\subset\mathbb{R}^n$ be convex and open. If a differentiable map $F:\Omega\to\mathbb{R}^n$ has a P-matrix Jacobian $DF(x)$ for all $x\in\Omega$, then $F$ is injective on $\Omega$.
\end{proposition}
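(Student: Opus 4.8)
The plan is to induct on the ambient dimension $n$, leaning on two elementary closure properties of P-matrices: every principal submatrix of a P-matrix is again a P-matrix, and the Schur complement of a leading principal block equals a ratio $\det DF/\det B$ of two positive principal minors, hence is positive. The base case $n=1$ is immediate, since a $1\times1$ P-matrix is a positive scalar, so $F'>0$ and $F$ is strictly increasing, thus injective.

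For the inductive step I would suppose, toward a contradiction, that $F(a)=F(b)=:y_0$ with $a\neq b$, and write $a=(a',a_n)$, $b=(b',b_n)$. Fixing the last coordinate at any level $x_n=c$ yields the open convex slice $\Omega_c=\{x':(x',c)\in\Omega\}$, and the reduced map $\widehat F_c(x')=(F_1,\dots,F_{n-1})(x',c)$ has Jacobian equal to the leading $(n-1)\times(n-1)$ block of $DF$, which is a P-matrix; by the inductive hypothesis $\widehat F_c$ is injective on $\Omega_c$. If $a_n=b_n$ this forces $a'=b'$, contradicting $a\neq b$, so only the case $a_n\neq b_n$ (say $a_n<b_n$) remains.

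Here I would analyze the level set $\Gamma=(F_1,\dots,F_{n-1})^{-1}\big((y_0)_{1:n-1}\big)$. Since the leading block of $DF$ is invertible everywhere, $(y_0)_{1:n-1}$ is a regular value, so $\Gamma$ is an embedded $1$-manifold containing both $a$ and $b$; by injectivity of each $\widehat F_c$, $\Gamma$ meets every level $\{x_n=c\}$ in at most one point, so $\Gamma$ is the graph of a function $x_n\mapsto x'(x_n)$ over an open subset of the $x_n$-axis. Differentiating the defining constraint $\widehat F_{x_n}(x'(x_n))=(y_0)_{1:n-1}$ gives $\frac{d}{dx_n}F_n=\det DF/\det B>0$ (the Schur complement, with $B$ the leading block), so $F_n$ is strictly increasing along $\Gamma$. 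If the graph is defined on all of $[a_n,b_n]$, then $a$ and $b$ lie on a single arc of $\Gamma$ and strict monotonicity contradicts $F_n(a)=F_n(b)$.

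The main obstacle is to rule out a gap: to show the solution curve $x'(x_n)$ does not escape to $\partial\Omega$ at some intermediate level $c^\star\in(a_n,b_n)$, which would place $a$ and $b$ on different components of $\Gamma$. This is exactly the point where the rectangular Gale--Nikaid\^o argument would translate parallel to the $x_n$-axis and remain inside a box, a move unavailable in a merely convex region. I would close it by showing that the set of levels $x_n\in[a_n,b_n]$ for which the reduced equation is solvable along $\Gamma$ is both open (immediate from the implicit function theorem) and closed (the delicate direction), using convexity of $\Omega$ together with the fact that $\det DF>0$ makes $F$ an orientation-preserving local diffeomorphism, so that the continuation can equivalently be tracked through the topological degree of $F$. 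The reason a softer argument will not do is that the set of P-matrices is not convex: the averaged Jacobian $\int_0^1 DF(a+t(b-a))\,dt$ need not be a P-matrix, so no mean-value identity or straight-line homotopy between P-matrix maps is available, and purely pointwise conditions such as the Fiedler--Pt\'ak sign test applied to the direction $b-a$ are too weak to force injectivity on their own.
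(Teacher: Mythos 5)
Your induction set-up is sound as far as it goes: the leading principal block of a P-matrix is again a P-matrix, so each slice map $\widehat F_c$ is injective by the inductive hypothesis; the level set $\Gamma$ is an embedded curve because the invertibility of that block makes $(y_0)_{1:n-1}$ a regular value; and the Schur-complement identity $\tfrac{d}{dx_n}F_n=\det DF/\det B>0$ correctly shows that $F_n$ is strictly increasing along any arc of $\Gamma$ that is a graph over an $x_n$-interval. But the proof is not complete, and the missing step is exactly the hard one. You must show that the component of $\Gamma$ through $a$ continues as a graph over all of $[a_n,b_n]$ and in particular reaches $b$; equivalently, that the maximal interval of continuation does not terminate because $x'(x_n)$ leaves every compact subset of $\Omega$ before $x_n$ reaches $b_n$. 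You name this ``the delicate direction'' and propose to handle it with convexity plus topological degree, but no argument is actually given, and none is readily available: $\det DF>0$ alone does not prevent an orientation-preserving local diffeomorphism on a convex domain from failing to be injective, so degree-theoretic tracking cannot close the gap without invoking the P-property again in some quantitative, a priori way. This is precisely the point at which Gale--Nikaid\^o use the rectangular hypothesis, and your sketch supplies no substitute for it.

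The paper takes an entirely different, self-contained route --- one that your closing remarks explicitly (and incorrectly) dismiss. It uses the Fiedler--Pt\'ak characterization of P-matrices, namely that $\max_i w_i(Aw)_i>0$ for every nonzero $w$, applied along the straight chord $\gamma(t)=a+t(b-a)$. With $v=b-a$, $g_i(t)=v_i\big(F_i(\gamma(t))-F_i(a)\big)$ and $\psi(t)=\max_i g_i(t)$, the pointwise inequality $\max_i g_i'(t)>0$ is converted, via a Dini-derivative and supremum argument, into strict growth of $\psi$ on $[0,1]$, whence $\psi(1)>0$ and $F(b)\neq F(a)$. The point you missed is that the Fiedler--Pt\'ak test is not applied to a fixed coordinate but to the running maximum over coordinates, which is exactly how a pointwise sign condition is globalized along the segment without ever averaging Jacobians (so the non-convexity of the set of P-matrices is irrelevant). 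To salvage your continuation argument you would need an a priori compactness estimate confining $\Gamma$ to a compact subset of $\Omega$ between the levels $a_n$ and $b_n$; absent that, the max-function argument is the way to go.
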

\begin{proof}[Proof of Proposition \ref{prop:P-on-convex}]
Fix distinct $a,b\in\Omega$ and set $v:=b-a\neq0$. Let $\gamma(t):=a+t\,v$ for $t\in[0,1]$. For each $i$ define
\[
g_i(t):=v_i\big(F_i(\gamma(t))-F_i(a)\big),\qquad 
\psi(t):=\max_{1\le i\le n} g_i(t),
\]
and the active index set $I(t):=\{\,i:\ g_i(t)=\psi(t)\,\}$. By differentiability of $F$ and the chain rule,
\[
g_i'(t)=v_i\big(DF(\gamma(t))\,v\big)_i \quad (t\in[0,1]).
\]
Recall the Fiedler–Pt\'ak criterion (see Theorem 3.3 (ii) of \cite{fiedler1962matrices}): for every nonzero $w\in\mathbb{R}^n$,
\[
\max_{1\le i\le n} w_i\,(A w)_i>0\qquad\text{whenever}\ A\ \text{is a $P$-matrix}.
\]
Since $DF(x)$ is a $P$-matrix for all $x\in\Omega$, for every $t\in[0,1]$:
\begin{equation}\label{eq:FP-along-line}
\max_{1\le i\le n} g_i'(t)=\max_{1\le i\le n} v_i\big(DF(\gamma(t))\,v\big)_i \;>\;0.
\end{equation}

Next is to show that $\psi$ is strictly increasing on $[0,1]$.

The upper right Dini derivative of the pointwise maximum satisfies
\[
D^+\psi(t):=\limsup_{\delta\downarrow0}\frac{\psi(t+\delta)-\psi(t)}{\delta}
\ \ge\ \max_{i\in I(t)} g_i'(t)\qquad (t\in[0,1)).
\]
This follows from $\psi(t+\delta)\ge g_i(t+\delta)$ for every $i$ and taking $\limsup_{\delta\downarrow0}$.

Fix $t_0\in[0,1)$. Suppose, to the contrary, that $\psi(t)\le\psi(t_0)$ for all $t\in[t_0,t_0+\varepsilon]$ with some $\varepsilon>0$.
Then for any $i$ and $0<\delta\le\varepsilon$,
\[
\frac{g_i(t_0+\delta)-g_i(t_0)}{\delta}
\ \le\ \frac{\psi(t_0+\delta)-\psi(t_0)}{\delta}\ \le\ 0,
\]
hence $g_i'(t_0)\le 0$. This contradicts \eqref{eq:FP-along-line}, which ensures that
$\max_i g_i'(t_0)>0$. Therefore for every $t_0\in[0,1)$ there exists $\varepsilon(t_0)>0$ with
\[
\psi(t_0+\varepsilon(t_0))>\psi(t_0).
\]

Define $S:=\{t\in[0,1]:\ \psi(t)>\psi(0)\}$. By Step 2, $S$ is nonempty and open in $[0,1]$.
Let $s:=\sup S$. If $s<1$, applying Step 2 at $t_0=s$ yields a small $\varepsilon>0$ with
$\psi(s+\varepsilon)>\psi(s)$, contradicting the definition of $s$. Hence $s=1$, i.e.,
$\psi(1)>\psi(0)=0$. Thus $F(b)\neq F(a)$, and $F$ is injective on $\Omega$.
\end{proof}

By Proposition~\ref{prop:globalP} and Proposition~\ref{prop:P-on-convex}, $T_M:\Delta_{K-1}^{\downarrow}\to\mathbb{R}^r$ is injective for $r \geq K-1$ since $\Delta_{K-1}^{\downarrow}$ is convex.

\begin{remark}
The inequality $r\ge K-1$ is used only in Step (3), where a square Jacobian is required to invoke Proposition \ref{prop:P-on-convex}. If $r=K-1$, the Jacobian $D T_M(p)$ is square and a P-matrix. If $r>K-1$, choose any row set $R$ with $|R|=K-1$ and define the submap $F_R:=\pi_R\circ T_M:\Delta_{K-1}^{\downarrow}\to\mathbb{R}^{K-1}$ by keeping the coordinates in $R$. By Steps (1)–(2), the Jacobian $D F_R(p)$ is a P-matrix for all $p$, hence $F_R$ is injective by Proposition~\ref{prop:P-on-convex}. In contrast, for $r<K-1$, the injectivity on a $(K-1)$-dimensional convex domain may not hold (see Section \ref{subsec:proof-main-(ii)}).
\end{remark}

\subsection{Proof of Theorem \ref{thm:main}(ii): $T_M$ is not injective for $r<K-1$ when $p$ has no multiplicity}
\label{subsec:proof-main-(ii)}
\begin{proof}[Proof of Theorem \ref{thm:main}(ii)]

Consider the skew ray
\[
p(\varepsilon)=\big(1-\sum_{k=2}^K \varepsilon_k,\ \varepsilon_2,\ldots,\varepsilon_K\big),
\qquad 0<\varepsilon_K\ll\cdots\ll\varepsilon_2\ll1,
\]
which lies in $\Delta_{K-1}^{\downarrow}$ and has strictly ordered coordinates (no multiplicity).
Along this ray,
\[
S_{p}(m)=p_1^m(1+o(1)),\qquad \alpha(m)=o(1)\quad\text{as }\varepsilon\to0,
\]
uniformly for $m\in M$ (finite). Using the column factorization \eqref{eq:colfactor},
each entry of the Jacobian $D T_M(p)$ equals a positive prefactor times
\[
\Phi_m(u_k)=(\alpha(m)-u_k) e^{(m-1)u_k}-\alpha(m)
=(-u_k)e^{(m-1)u_k}(1+o(1)).
\]
Let $\rho_k:=p_k/p_1\in(0,1)$ and $t_k:=-\ln\rho_k=-u_k>0$; for the skew ray $t_2<\cdots<t_K$.
Fix the $r\times r$ block with row indices $m_1,\dots,m_r$ and column indices $k=2,\dots,r+1$.
Up to positive row/column scalings and an $(1+o(1))$ factor, this block reduces to
\[
B_{j\ell}\;=\;e^{-\,m_j\,t_{\,\ell+1}}\qquad (j,\ell=1,\dots,r).
\]
By Lemma~\ref{lemma:LaplaceSTP}, the Laplace kernel $L(m,t)\mapsto e^{-mt}$ is STP on $(0,\infty)^2$,
hence for strictly increasing $\{m_j\}$ and $\{t_{\ell+1}\}$,
\[
\det(B)\;=\;\det\!\big[e^{-m_j t_{\ell+1}}\big]_{j,\ell=1}^r\;>\;0.
\]
Therefore, for all sufficiently small $\varepsilon$, the corresponding $r\times r$ block of
$D T_M\big(p(\varepsilon)\big)$ is nonsingular, implying
\[
\operatorname{rank}\big(D T_M\big(p(\varepsilon)\big)\big)\;=\;r.
\]
Choose such a point $\varepsilon^\star$ and denote the corresponding ray by $p^\star:=p(\varepsilon^\star)$.

Since the determinant of the chosen $r\times r$ minor is continuous in $p$ and nonzero at $p^\star$,
it remains nonzero on a neighborhood $U$ of $p^\star$ in $\Delta_{K-1}^{\downarrow}$.
Hence $T_M$ has constant rank $r$ on $U$. By Rank Theorem (Theorem 4.12 of \cite{lee2003introduction}), there exists a $y\in \mathbb{R}^r$ near $T_M(p^\star)$ such that $T_M^{-1}(y)\cap U$ is a real-analytic submanifold of dimension
\[
\dim\big(\Delta_{K-1}^{\downarrow}\big)-r\;=\;(K-1)-r\;\ge\;1.
\]
This submanifold contains infinitely many distinct elements (probability distributions) in $\Delta_{K-1}^{\downarrow}$ that map to the same point in $\mathbb{R}^r$. This shows that $T_M$ is not injective on $\Delta_{K-1}^{\downarrow}$ whenever $r<K-1$.
\end{proof}

\subsection{Proof of Theorem \ref{thm:main}(iii): The case $K=2$}
\label{subsec:proof-main(iii)}
\begin{proof}[Proof of Theorem \ref{thm:main}(iii)]
WLOG assume $p_1 > p_2$.
Let $p:=p_1$ and $w:=\dfrac{p^m}{p^m+(1-p)^m}\in(1/2,1)$.
Then
\[
\frac{dH}{dw}=\ln\frac{1-w}{w},\qquad
\frac{dw}{dp}=\frac{m\,p^{m-1}(1-p)^{m-1}}{(p^m+(1-p)^m)^2}>0,
\]
so
\[
\frac{d\,\Hp{m}(p)}{dp}
=\frac{dH}{dw}\cdot\frac{dw}{dp}
=\ln\frac{1-w}{w}\cdot\frac{m\,p^{m-1}(1-p)^{m-1}}{(p^m+(1-p)^m)^2}<0.
\]
Strict monotonicity implies injectivity.
\end{proof}

\subsection{Proof of Theorem \ref{thm:main}(iv): Multiplicity known}
\label{subsec:proof-main(iv)}

\subsubsection{Necessity}
\begin{proposition}\label{prop:stratum-lb}
If $r < s-1$, then $\TM$ is not injective on $\Sigma^{\downarrow}_{\bm n}$.
\end{proposition}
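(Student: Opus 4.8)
The plan is to mirror the necessity argument of Theorem~\ref{thm:main}(ii), but carried out intrinsically on the $(s-1)$-dimensional stratum $\Sigma^{\downarrow}_{\bm n}$ rather than on all of $\SimplexDown$. First I would restrict $\TM$ to the chart $(a_2,\ldots,a_s)$ with $a_1=(1-\sum_{j\ge2}n_j a_j)/n_1$ and write the GSE of a stratum point explicitly. Grouping the $K$ coordinates by their shared value, one obtains $S_p(m)=\sum_{j=1}^s n_j a_j^{m}$ and $\Hp{m}(p)=\ln S_p(m)-m\,\mu(m)$ with grouped weights $w_j(m)=n_j a_j^{m}/S_p(m)$ and $\mu(m)=\sum_j w_j(m)\ln a_j$. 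The purpose of this reduction is to show that the stratum GSE has exactly the same functional form as on the full simplex, with the multiplicities $n_j$ entering only as positive weights.

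Next I would compute the Jacobian of the restricted map. Differentiating value-by-value via~\eqref{eq:grad} and passing through the constraint ($\partial a_1/\partial a_k=-n_k/n_1$), the entries collapse to
\[
\frac{d\Hp{m}}{d a_k}=\frac{n_k\,m^2 a_1^{\,m-1}}{S_p(m)}\,\Phi_m(u_k),\qquad k=2,\ldots,s,
\]
where $u_k=\ln(a_k/a_1)\in(-\infty,0)$, $\alpha(m)=\mu(m)-\ln a_1$, and $\Phi_m$ is the same core function as in~\eqref{eq:colfactor}. This is precisely the column-factored structure of~\eqref{eq:colfactor}, the only new feature being the extra strictly positive scalar $n_k$ in each column, which cannot affect any sign. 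A short computation confirms $\alpha'(m)=\Var_{w(m)}(\ln a_j)\ge0$, so $\alpha$ remains nondecreasing and every hypothesis feeding Proposition~\ref{prop:ECT-bridge} stays in force, now with the $s-1$ strictly ordered nodes $u_2<\cdots<u_s<0$.

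With this in hand the conclusion follows as in Theorem~\ref{thm:main}(ii). The Jacobian is an $r\times(s-1)$ matrix; choosing the rows $M$ and the columns $k=2,\ldots,r+1$, the corresponding $r\times r$ minor equals a strictly positive prefactor times $\det[\Phi_{m_a}(u_{b+1})]_{a,b=1}^{r}$, which is strictly positive by Proposition~\ref{prop:ECT-bridge} (applicable since $r\le s-2<s-1$). Hence $D\big(\TM|_{\Sigma^{\downarrow}_{\bm n}}\big)$ has constant rank exactly $r$ on the stratum. Because $r<s-1=\dim\Sigma^{\downarrow}_{\bm n}$, the Rank Theorem (exactly as invoked in Section~\ref{subsec:proof-main-(ii)}) yields fibers $\TM^{-1}(y)$ that are submanifolds of dimension $(s-1)-r\ge1$, each containing infinitely many distinct stratum distributions mapping to the same GSE vector. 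Thus $\TM$ is not injective on $\Sigma^{\downarrow}_{\bm n}$.

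The main obstacle is bookkeeping rather than new analysis: one must verify that the total-positivity apparatus of Section~\ref{subsec:proof-main(i)} transfers verbatim to the grouped setting. Once the stratum GSE is exhibited as depending on $(a_1,\ldots,a_s)$ through the same $S_p,\mu,\alpha,\Phi_m$ objects—with multiplicities appearing only as positive weights and with $\alpha$ still nondecreasing—Proposition~\ref{prop:ECT-bridge} applies with $K$ replaced by $s$, and nothing needs to be redone. The remaining care is to confirm that $\Sigma^{\downarrow}_{\bm n}$ is a genuinely open subset of $\mathbb{R}^{s-1}$ in its chart (the defining inequalities $a_1>a_2>\cdots>a_s>0$ are all open conditions), so that the Rank Theorem is legitimately available.
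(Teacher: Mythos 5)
Your proposal is correct, and it fills in exactly the details the paper omits (the paper's own proof is a one-line ``similar to the proof of Theorem~\ref{thm:main}(ii)''). Your reduction of the stratum Jacobian to the same $\Phi_m(u_k)$ columns with the multiplicities $n_j$ appearing only as positive column scalars, the verification that $\alpha(m)=\sum_j w_j(m)\ln(a_j/a_1)\le 0$ remains nondecreasing under the grouped weights $w_j(m)=n_j a_j^m/S_p(m)$, and the openness of the chart are all sound, and the Rank Theorem conclusion matches the paper's argument. The one place you genuinely depart from the template of Section~\ref{subsec:proof-main-(ii)} is in how the nonvanishing $r\times r$ minor is obtained: the paper works near a skew ray $p(\varepsilon)$ with $\varepsilon\to 0$, where $\alpha(m)=o(1)$ collapses the block to a Laplace-kernel determinant $\det[e^{-m_j t_{\ell+1}}]>0$, and then gets nonsingularity only on a neighborhood of one point $p^\star$ by continuity; you instead invoke Proposition~\ref{prop:ECT-bridge} (with $K$ replaced by $s$) to get strict positivity of $\det[\Phi_{m_a}(u_{b+1})]$ at \emph{every} stratum point. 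Your route is slightly stronger and cleaner --- it yields constant rank $r$ on all of $\Sigma^{\downarrow}_{\bm n}$ rather than locally --- at the cost of leaning on the heavier total-positivity machinery of Section~\ref{subsec:proof-main(i)}, whereas the paper's asymptotic argument is self-contained given only Lemma~\ref{lemma:LaplaceSTP}. Either suffices for non-injectivity, since the Rank Theorem only needs a single point of rank $r$ with $r<s-1$.
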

\begin{proof}[Proof of Proposition \ref{prop:stratum-lb}]
Similar to the proof of Theorem \ref{thm:main}(ii) in Subsection \ref{subsec:proof-main-(ii)}.
\end{proof}

\subsubsection{Sufficiency}

\begin{proposition}\label{prop:s-globalP}
$D T_M(p)$ is a P-matrix on $\Sigma^{\downarrow}_{\bm n}$.
\end{proposition}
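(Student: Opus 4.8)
The plan is to reduce the multiplicity case to the full–support machinery of Section~\ref{subsec:proof-main(i)} by working in the intrinsic chart $(a_2,\ldots,a_s)$ of $\Sigma^{\downarrow}_{\bm n}$, with $a_1=(1-\sum_{j\ge2}n_j a_j)/n_1$. First I would compute the Jacobian of $T_M$ restricted to the stratum. Since $\Hp{m}$ is label–invariant and the $n_j$ coordinates sharing the common value $a_j$ move together, applying the chain rule to \eqref{eq:grad} should give
\[
\frac{\partial \Hp{m}}{\partial a_j}
= n_j\,\frac{m^2 a_j^{\,m-1}}{S_p(m)}\big(\mu(m)-\ln a_j\big),
\qquad j=1,\ldots,s,
\]
where now $S_p(m)=\sum_{j=1}^s n_j a_j^{m}$ and $\mu(m)=\sum_{j=1}^s \tfrac{n_j a_j^{m}}{S_p(m)}\ln a_j=y_p'(m)$. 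Eliminating $a_1$ via $\partial a_1/\partial a_j=-n_j/n_1$ and factoring out $a_1^{\,m-1}$, I expect each chart column to collapse to exactly the core form of \eqref{eq:colfactor},
\[
\big[DT_M(p)\big]_{m,j}
= n_j\,\frac{m^2 a_1^{\,m-1}}{S_p(m)}\,\Phi_m(u_j),
\qquad u_j:=\ln\frac{a_j}{a_1}\in(-\infty,0),\ \ j=2,\ldots,s,
\]
with $\alpha(m):=\mu(m)-\ln a_1\le0$ playing the same role as before.

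The key point is that this identity places the stratum Jacobian into the \emph{identical} structure treated in Step~(1), with $s$ in the role of $K$. The distinct values $a_1>\cdots>a_s$ replace $p_1>\cdots>p_K$, the $s-1$ log–ratios $u_2,\ldots,u_s$ replace the $K-1$ nodes, and the only new feature is a positive column scalar $n_j$. The monotonicity that drives the ECT argument survives unchanged: I would verify that $\alpha'(m)=y_p''(m)=\Var_{w(m)}(\ln p_i)\ge0$, where $w(m)$ are now the grouped masses $n_j a_j^{m}/S_p(m)$, so $\alpha$ remains nondecreasing. Consequently Proposition~\ref{prop:ECT-bridge} applies verbatim to $f_j(m):=\Phi_m(u_{j+1})$, $j=1,\ldots,s-1$: these form an ECT system on $(0,\infty)$, so every core evaluation determinant $\det[\Phi_{m_a}(u_{j_b})]$ with strictly increasing orders is strictly positive.

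To finish, I would repeat the minor–factorization argument of Proposition~\ref{prop:globalP}. For any matching row set $R$ and column set $C$ of size $k$, the column factorization separates a strictly positive row factor $\prod_{m\in R} m^2 a_1^{\,m-1}/S_p(m)$ and a strictly positive column factor $\prod_{j\in C} n_j$ from the core determinant, which is positive by the ECT bridge. Hence every principal minor of $DT_M(p)$ is strictly positive on $\Sigma^{\downarrow}_{\bm n}$, which is the definition of a P–matrix. Combined with Proposition~\ref{prop:P-on-convex}, applied on the convex chart $(a_2,\ldots,a_s)$, this delivers injectivity of $\TM$ on $\Sigma^{\downarrow}_{\bm n}$ for $r\ge s-1$.

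The main obstacle I anticipate is bookkeeping rather than conceptual: confirming that constrained differentiation through $a_1=(1-\sum_{j\ge2}n_j a_j)/n_1$ reproduces precisely the $\Phi_m(u_j)$ factorization with strictly positive prefactors, and checking that the multiplicities $n_j$ enter only as positive column scalars, so that they are harmless for the P–matrix property and merely contribute $\prod_{j\in C} n_j>0$ to each minor. Once this verification is secured, no new total–positivity input is required: the STP of the Laplace kernel, the nondecreasing $\alpha$, and Lemmas~\ref{lemma:Tek}--\ref{lemma:KVD} transfer to $s-1$ nodes without modification.
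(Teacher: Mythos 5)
Your proposal is correct and matches the paper's intent exactly: the paper's own proof of this proposition is just a one-line pointer back to Proposition~\ref{prop:globalP}, and your chart computation (columns factoring as $n_j\,m^2 a_1^{m-1}S_p(m)^{-1}\Phi_m(u_j)$ with the multiplicities entering only as positive column scalars, and $\alpha'(m)\ge0$ surviving as the grouped escort variance) is precisely the verification that makes that reduction work. No discrepancy with the paper's approach.
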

\begin{proof}[Proof of Proposition \ref{prop:s-globalP}]
Similar to the proof of Proposition \ref{prop:globalP} in Subsection \ref{proof-III-i-step1}.
\end{proof}

By Proposition~\ref{prop:s-globalP} and Proposition~\ref{prop:P-on-convex}, $T_M:\Sigma^{\downarrow}_{\bm n}\to\mathbb{R}^r$ is injective for $r \geq s-1$ since $\Sigma^{\downarrow}_{\bm n}$ is convex.

\section{Conclusion}

\label{sec:conclusion}

This article has established necessary and sufficient conditions under which Generalized Shannon's Entropy (GSE) characterizes a finite discrete distribution. The results sharpen the theoretical understanding of GSE and broaden its utility for distributional characterization. From a practical perspective, GSE immediately enables goodness-of-fit procedures for comparing probability distributions, including comparisons across disparate sample spaces where link-based tools are ill-suited ($e.g.$, Pearson's Chi-squared Goodness-of-fit Test \cite{pearson1900x}).

Several directions merit further investigation. One avenue is to study data- and task-dependent choices of GSE orders, seeking optimal selections under different scenarios. Another is to extend the present discrete framework to continuous settings, clarifying measure-theoretic requirements and stability properties. These developments would deepen both the theoretical and applied reach of GSE-based methods.

\bibliographystyle{IEEEtran}
\bibliography{ref.bib}

\end{document}